\newtheorem{Prop}{Proposition}
\newtheorem{lem}{Lemma}
\newtheorem{nota}{Remark}
\title{Spectral Distribution of the Free unitary Brownian motion: another approach}
\author[N. Demni]{Nizar Demni}
\address{IRMAR, Universit\'e de Rennes 1\\ Campus de
Beaulieu\\ 35~042 Rennes cedex\\ France}
\email{nizar.demni@univ-rennes1.fr}
\author[T. Hmidi]{Taoufik Hmidi}
\address{IRMAR, Universit\'e de Rennes 1\\ Campus de
Beaulieu\\ 35~042 Rennes cedex\\ France}
\email{thmidi@univ-rennes1.fr}
\begin{document}
\keywords{Free unitary Brownian motion, spectral measure, Cauchy's Residue Theorem.}
\subjclass[2000]{42A70; 46L54.}

\maketitle
\begin{abstract}
We revisit the description provided by Ph. Biane of the spectral measure of the free unitary Brownian motion. We actually construct for any $t \in (0,4)$ a Jordan curve $\gamma_t$ around the origin, not intersecting the semi-axis $[1,\infty[$ and whose image under some meromorphic function $h_t$ lies in the circle. Our construction is naturally suggested by a residue-type integral representation of the moments and $h_t$ is up to a M\"obius transformation the main ingredient used in the original proof. Once we did, the spectral measure is described as the push-forward of a complex measure under a local diffeomorphism yielding its absolute-continuity and its support. Our approach has the merit to be an easy yet technical exercise from real analysis.    
\end{abstract}

\section{Reminder and Motivation}
In his pioneering paper \cite{Biane}, Ph. Biane defined and studied the so-called free unitary or multiplicative Brownian motion. It is a unitary operator-valued L\'evy process with respect to the free multiplicative convolution of probability measures on the unit circle $\mathbb{T}$ (or equivalently the multiplication of unitary operators that are free in some non commutative probability space). Besides, the spectral distribution $\mu_t$ at any time $t \geq 0$ is characterized by its moments 
\begin{equation*}
m_n(t) :=  \int_{\mathbb{T}} z^n d\mu_t(z) = e^{-nt/2} \sum_{k=0}^{n-1}\frac{(-t)^k}{k!} n^{k-1}\binom{n}{k+1}, n \geq 1,
\end{equation*}
and $m_{-n}(t) = m_n(t), n \geq 1$ since $Y^{-1}$ defines a free unitary Brownian motion too.  This alternate sum is not easy to handle analytically since for instance if we try to work out the moments generating function of $(m_n(t))_{n \geq 1}$ 
\begin{equation*}
M_t(w) := \sum_{n\geq 1}m_n(t)w^n, \, |w| < 1, 
 \end{equation*}
 then we are led to 
\begin{equation*}
M_t(w) = u \sum_{k \geq 0} \frac{(-ut)^k}{k!}S_k(u)
\end{equation*}
where
\begin{equation*}
S_k(u) : = \sum_{n \geq 0} (n+k+1)^{k-1}\binom{n+k+1}{k+1} u^{n}, \quad k \geq 0,
\end{equation*}
and $u = we^{-t/2}$. Nonetheless, the explicit inverse function of 
\begin{equation*}
\tau_t(w) := \int_{\mathbb{T}}\frac{z+w}{z-w}\mu_t(dz) = 1+2M_t(w)
\end{equation*}
in the open unit disc played a key role in the description of $\mu_t$ (\cite{Biane0}). More precisely, it was proved there that $\mu_t$ is an absolutely continuous probability measure with respect to the normalized Haar measure on $\mathbb{T}$ and that its density is a real analytic function inside its support. The latter coincides with $\mathbb{T}$ when $t > 4$ while it is given by the angle 
\begin{equation*}
|\theta| \leq \beta(t) := (1/2)\sqrt{t(4-t)} + \arccos(1-(t/2))
\end{equation*}
when $t \leq 4$. When proving these important results, the author relied on free stochastic integration (Lemma 11 p.266), Caratheodory's extension Theorem for Riemann maps (Lemma 12 p.270) and a Poisson-type integral representation for this kind of maps (see the proof of Proposition 10 p.270). In the present paper, we shall recover Biane's results from more simpler considerations than the ones used in the original proof. Indeed, for $t \in (0,4)$, there exists a unique piecewise smooth Jordan curve $\gamma_t$ around the origin, not intersecting the semi-axis $[1,\infty[$ and whose image under some function $h_t$ lies in $\mathbb{T}$. Our construction is naturally suggested by a residue-type integral representation of $m_n(t)$ and fails when $t \geq 4$. Note  that the same phenomenon happens here and in Biane's proof: $\gamma_t$ is constructed upon two curves that have a non empty intersection if and only if $t < 4$, while the inverse function of $\tau_t$ is defined on the interiors of two Jordan domains whose boundaries have the same phase transition (\cite{Biane0} p. 267). Moreover, the function $h_t$ appears in the integrand of our residue-type representation and coincides up to the
M\"obius transformation 
\begin{equation*}
z \mapsto \frac{2}{z}-1
\end{equation*}
with the inverse function of $\tau_t$ used in the original proof. Once the curve $\gamma_t$ is constructed, we consider a piecewise smooth parametrization $z_t$ of $\gamma_t$ and prove that the derivative of $\theta \mapsto \arg[h_t(z_t(\theta))]$ vanishes if and only if $\theta = \pm \arccos(\sqrt{t}/2)$ (note that similarly the derivative of $\tau_t^{-1}$ vanishes if and only if $z = \pm i \sqrt{(4/t) -1}$, \cite{Biane0} p. 269). As a matter of fact, $\theta \mapsto \arg[h_t(z_t(\theta))]$ defines far from the critical points a local diffeomorphism so that performing local changes of variables in our integral representation yields both the absolute-continuity of $\mu_t$ with respect to the Haar measure on $\mathbb{T}$ and the description of its support.

\section{A Residue-type representation of $m_n(t)$}
A residue-type integral representation of $m_n(t), n \geq 1$ is not new in its own. Indeed, the one we derive below is an elaborated version of the one used in order to determine the decay order of $m_n(t)$ (see \cite{Levy} p. 566): 
\begin{equation*}
m_n(t) = \frac{e^{-nt/2}}{2i\pi n}\int_{\gamma} e^{-ntz}\left(1+\frac{1}{z}\right)^n dz
\end{equation*}
where $\gamma$ is a circle around the origin. More precisely, we need to integrate by parts since then the meromorphic integrand we obtain determines in a unique way the required curve $\gamma_t$ we mentioned above. 
To proceed, we first apply Cauchy's Residues Theorem to $z \mapsto z^ke^{n/z}$ in order to get
\begin{equation*}
\frac{n^{k+1}}{(k+1)!} = \frac{1}{2i\pi} \int_{\gamma} z^{k}e^{n/z}dz,
\end{equation*}
then use the combinatorial identity for binomial numbers: 
\begin{equation*}
\binom{n}{k+1} = \frac{n}{k+1}\binom{n-1}{k}, \quad n \geq 1.
\end{equation*}
As a result, for any $n \geq 1$
\begin{align*}
m_n(t) &= e^{-nt/2} \sum_{k=0}^{n-1}\frac{(-t)^k}{k!} n^{k-1}\binom{n}{k+1}
\\& = \frac{e^{-nt/2}}{2i\pi n} \int_{\gamma} \sum_{k=0}^{n-1} \binom{n-1}{k}(-tz)^k e^{n/z} dz \\&
=  \frac{1}{2i\pi n} \int_{\gamma} (1-tz)^n e^{n(1/z - t/2)} \frac{dz}{1-tz} 
\\& = \frac{1}{2i\pi n} \int_{t\gamma} \left[(1-z)e^{t(1/z - 1/2)}\right]^n \frac{dz}{t(1-z)}
\\& = \frac{1}{2i\pi n} \int_{\gamma} \left[(1-z)e^{t(1/z - 1/2)}\right]^n \frac{dz}{t(1-z)}
\\& := \frac{1}{2i\pi n} \int_{\gamma} [h_t(z)]^n \frac{dz}{t(1-z)}.
\end{align*}
Now, choose further $\gamma$ such that it does not meet the semi-axis $[1,\infty[$ then $z \mapsto \log(1-z)$ is well defined for $z \in \gamma$ and is holomorphic there. Hence, setting $z = re^{i\theta}, 0 < r < 1$ and integrating by parts yield  
\begin{align*}
m_n(t) & = \frac{1}{2i\pi\,t} \int_{\gamma} [h_t(z)]^n \frac{h_t'(z)}{h_t(z)}\log(1-z) dz. 
\end{align*}
As a matter of fact, $m_n(t)$ is the residue of 
\begin{equation*}
z \mapsto \frac{1}{t} [h_t(z)]^n\frac{h_t'(z)}{h_t(z)}  \log(1-z)
\end{equation*}
at $z=0$ so that one may integrate along any piecewise smooth Jordan curve $\gamma_t$ (possibly depending on $t$) around zero provided that the integrand is well defined.  Assume further that we can choose  
$\gamma_t$ such that $|h_t(\gamma_t)| \in \mathbb{T}$ and let $\theta \in [-\pi,\pi] \mapsto z_t(\theta)$ be a piecewise smooth parametrization of $\gamma_t$, then 
\begin{align*}
m_n(t) &=  \frac{1}{2i\pi\,t} \int_{-\pi}^{\pi} e^{in\arg h_t(z_t(\theta))}\frac{h_t'(z_t(\theta))}{h_t(z_t(\theta))}\log(1-z_t(\theta))z_t'(\theta)d\theta 
\\& = \int_{-\pi}^{\pi} e^{in\theta}\nu_t(d\theta)
\end{align*}
where $\nu_t$ is the push-forward of 
\begin{equation*}
\frac{h_t'(z_t(\theta))}{h_t(z_t(\theta))}z_t'(\theta) \log(1-z_t(\theta)){\bf 1}_{[-\pi,\pi]}(\theta)\frac{d\theta}{2i\pi}  
\end{equation*}
under the map $\theta \mapsto \arg h_t(z_t(\theta))$. Heuristically, we are attempted to conclude that $\nu_t = \mu_t$ however it is not clear at all that $\nu_t$ is a real measure and there is no guarantee even for $\gamma_t$ to exist. 
Below, we shall prove that $\gamma_t$ exists if and only if $t \in (0,4)$ and is unique. Then $\gamma_t$ splits into two curves $\gamma_t^1 \cup \gamma_t^2$ where $\theta \mapsto \arg[h_t(z_t(\theta))]$ is a diffeomorphism from $\gamma_t^i$ to its image, for $i=1,2.$ As a result, a change of variables shows that $\nu_t$ and $\mu_t$ coincide since their trigonometric moments do, therefore $\mu_t$ is absolutely continuous and its support is easily recovered as $h_t(\gamma_t)$.

\section{Construction of the curve  $\gamma_t$}
Our main result is stated as 
\begin{Prop}
Let $t\in (0,4),$ then there exists a unique (piecewise smooth) Jordan curve $\gamma_t$ such that 
\begin{itemize}
\item $h_t(\gamma_t) \in \mathbb{T}$.
\item $\gamma_t$ encircles $z=0$ and $\gamma_t \cap [1,\infty[ \, = \, \emptyset$.  
\end{itemize}  
\end{Prop}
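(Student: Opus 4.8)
The plan is to understand the condition $|h_t(z)|=1$ explicitly and to show it cuts out exactly one Jordan curve of the required type when $t\in(0,4)$. Writing $z=re^{i\theta}$ with $0<r<1$, we have
\begin{equation*}
|h_t(re^{i\theta})|^2 = |1-re^{i\theta}|^2 \, e^{2t\,\mathrm{Re}(1/z)-t} = (1-2r\cos\theta+r^2)\,\exp\!\left(t\Big(\tfrac{2\cos\theta}{r}-1\Big)\right),
\end{equation*}
so the locus $|h_t(z)|=1$ is the level set of the real-analytic function
\begin{equation*}
F_t(r,\theta) := \log(1-2r\cos\theta+r^2) + t\Big(\tfrac{2\cos\theta}{r}-1\Big)
\end{equation*}
at level $0$. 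First I would study, for each fixed $\theta$, the function $r\mapsto F_t(r,\theta)$ on $(0,1)$ (and for $\theta$ near $0$ we must allow $r$ into a region avoiding $[1,\infty[$, i.e. we only need $z\neq$ real $\geq 1$; for $|\theta|$ small and $r<1$ this is automatic). As $r\to 0^+$ the term $2t\cos\theta/r$ dominates: it tends to $+\infty$ when $\cos\theta>0$ and to $-\infty$ when $\cos\theta<0$, while $\log(1-2r\cos\theta+r^2)\to 0$. This sign change is exactly the source of the phase transition. I expect to show that for $|\theta|<\pi/2$ the equation $F_t(r,\theta)=0$ has a unique solution $r=r_t(\theta)\in(0,1)$, by checking monotonicity of $r\mapsto F_t(r,\theta)$ (or at least that it crosses zero exactly once), and that $r_t$ extends smoothly up to the value of $\theta$ where the curve closes up.

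Next I would pin down where the curve closes. On the negative part ($\cos\theta<0$) one checks whether $F_t(r,\theta)=0$ still has a solution in $(0,1)$: as $r\to 0^+$ it is $-\infty$, and one must see whether $F_t$ can climb back to $0$ before $r=1$; at $r=1$, $F_t(1,\theta)=\log(2-2\cos\theta)+t(2\cos\theta-1)$, which is finite. The condition that determines the maximal angle $\beta(t)$ — equivalently the transition at $t=4$ — should come out of analysing when the relevant branch of $\{F_t=0\}$ reaches the real axis, i.e. the behaviour near $\theta=\pi$ (where $z$ is real negative) and near $\theta=0$; by the symmetry $F_t(r,-\theta)=F_t(r,\theta)$ the curve is symmetric about the real axis, so it is a genuine closed Jordan curve provided the two symmetric arcs meet the real axis at two points, one in $(-1,0)$ or to the left, and one point on $(0,1)$ or approached as $\theta\to 0$. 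I would verify $F_t(r,0)=\log(1-r)^2 + t(2/r-1)$ has a zero in $(0,1)$: as $r\to 1^-$ it is $-\infty$ and as $r\to 0^+$ it is $+\infty$, so there is a crossing $r_t(0)\in(0,1)$ on the positive axis — giving the right-hand vertex of $\gamma_t$, which lies strictly left of $1$, hence $\gamma_t\cap[1,\infty[=\emptyset$. Encircling $0$ is then immediate since $r_t(\theta)>0$ throughout.

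Uniqueness I would get from the uniqueness of $r_t(\theta)$ for each admissible $\theta$: any Jordan curve $\gamma$ with $|h_t|=1$ on it avoiding $[1,\infty[$ and winding once around $0$ must, in polar coordinates, be a union of graphs $r=r_t(\theta)$; if $r\mapsto F_t(r,\theta)$ has a single zero in the admissible range for each $\theta$, the curve is forced. Finally, for $t\geq 4$ I would show the construction fails: the relevant arcs no longer meet, i.e. the zero set of $F_t$ in the disc is not a closed curve around $0$ avoiding $[1,\infty[$ — concretely the positive-axis crossing $r_t(0)$ and the behaviour as $|\theta|$ increases no longer match up, mirroring Biane's observation that the two constituent curves intersect iff $t<4$. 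The main obstacle I anticipate is the careful monotonicity/convexity analysis of $r\mapsto F_t(r,\theta)$ uniformly in $\theta$, and especially identifying the precise closing angle and showing it is exactly $\beta(t)$ with the phase transition landing at $t=4$; this is where the ``easy yet technical exercise from real analysis'' advertised in the abstract will be concentrated, presumably via computing $\partial_\theta$ of $r_t$ and locating its critical points at $\theta=\pm\arccos(\sqrt{t}/2)$ as flagged in the introduction.
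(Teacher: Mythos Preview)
Your overall strategy---parametrise $|h_t(z)|=1$ in polar coordinates and study the radial function---is exactly what the paper does, but several of your concrete claims are wrong and would make the argument collapse. First, the curve $\gamma_t$ is \emph{not} contained in the open unit disc: at $\theta=\pi/2$ one finds $r_t(\pi/2)=\sqrt{e^t-1}$, which exceeds $1$ as soon as $t>\log 2$, and at $\theta=\pi$ the radius exceeds $1$ for $t>(\log 4)/3$. Your restriction to $r\in(0,1)$ therefore loses most of the curve; the only constraint is $z\notin[1,\infty[$, and the paper analyses $g_{t,\theta}(r)$ on all of $(0,\infty)$. Second, for $\cos\theta>0$ your ``unique crossing'' claim is false: one always has the trivial solution $r=2\cos\theta$ (check directly that $g_{t,\theta}(2\cos\theta)=e^t$), which traces the circle $|z-1|=1$ through $z=0$ and $z=2$ and must be discarded. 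The genuine $\gamma_t$ is built from the \emph{other} root, and the paper spends most of its effort showing this second root exists and is unique, splitting into the ranges $t\in(0,1]$, $t\in[1,2+\sqrt3]$, and $t\in(2+\sqrt3,4)$; in the last range $r_t(\theta)$ can be multivalued near $\theta=0$, so polar coordinates fail and one must switch to Cartesian coordinates and exhibit $\gamma_t$ as a graph $y=y_t(x)$. Your uniqueness argument (``single zero forces the curve'') thus breaks both because there are two radial zeros and because the curve need not be a polar graph.

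Finally, you conflate two different angles. The curve $\gamma_t$ is a full closed curve, parametrised over all $\theta\in[-\pi,\pi]$; it does not ``close at'' some angle smaller than $\pi$. The angle $\beta(t)$ is the half-length of the arc $h_t(\gamma_t)\subset\mathbb{T}$, i.e.\ the range of $\arg h_t(z_t(\theta))$, and it enters only later when one studies the critical points of $\theta\mapsto\arg h_t(z_t(\theta))$---that is a separate section of the paper and not part of the construction of $\gamma_t$. The phase transition at $t=4$ is detected here by the fact that the trivial branch $r=2\cos\theta$ and the nontrivial branch meet at $\theta_t=\arccos(\sqrt t/2)$, $r=\sqrt t$; when $t\to 4$ this meeting point migrates to $\theta=0$, $r=2$, so the only closed curve through the level set crosses $[1,\infty[$.
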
 

\begin{proof}
Before coming into computations, let us point to the fact that $\gamma_t$ has to be invariant under complex conjugation: this fact follows from $|h_t(\overline{z})| = |\overline{h_t(z)}| = |h_t(z)|$. It is also coherent with the fact that $\rho_t$ shares the same invariance property since $Y^{-1} = Y^{\star}$ is also a free unitary Brownian motion, therefore we only consider $\theta \in [0,\pi]$. 
We also inform the patient reader that both polar and cartesian coordinates are used in the sequel depending on how behaves the curve defined by $|h_t(z)| = 1$ when $t$ runs over $]0,4[$. 
\subsection{Polar coordinates} Let $z = re^{i\theta}, \theta \in [0,\pi]$ then $|h_t(z)| = 1$ is equivalent to
\begin{equation*}
g_{t,\theta}(r) : = (1+ r^2 - 2r\cos\theta) e^{(2t\cos \theta)/r}= e^t.
\end{equation*}
We distinguish two regions:

$\bullet$ \underline{$\{\theta, \cos\theta<0\}$:} 
In this region the function $g_{t,\theta}$ is increasing  and satisfies  
\begin{equation*}
\lim_{r \rightarrow 0^+} g_{t,\theta}(r) = 0, \quad   \lim_{r \rightarrow \infty} g_{t,\theta}(r) = \infty. 
\end{equation*}
The monotonicity of $g_{t,\theta}$ follows obviously from 
\begin{equation}\label{E1}
g_{t,\theta}'(r) = 2e^{2t\cos \theta/r}\left(r- \cos\theta - (1+r^2 - 2r\cos\theta)\frac{t\cos\theta}{r^2}\right).
\end{equation}
Then $g_{t,\theta}(r) = e^t$ has a unique solution $r = r_t(\theta)>0$. Note that the implicit function Theorem together with the fact that $g_{t,\theta}'(r) > 0$ show that $\theta \mapsto r_{t}(\theta)$ is at least $C^1$ on $]\pi/ 2,\pi[$. 
Now, it is obvious that $r_t$ does not vanish on $\{\cos\theta < 0\}$ and we shall check that it remains so on $\{\cos\theta = 0\}$. More precisely, we claim that $r_t(\theta) > \sqrt{t}$ which may be proved as follows: use
\begin{equation*}
 1+t<e^t\quad \hbox{and} \quad 1-2\sqrt{t}\cos\theta<e^{-2\sqrt{t}\cos\theta}
\end{equation*}
to get
\begin{eqnarray*}
1+t-2\sqrt{t}\cos\theta \leq (1+t)(1-2\sqrt{t}\cos\theta) < e^{t-2\sqrt{t}\cos\theta}.
\end{eqnarray*}
This in turn yields
$$
g_{t,\theta}(\sqrt{t})<e^t
$$
and the monotonicity of $g_{t,\theta}$ proves the claim. As a matter of fact $r_t$ extends continuously to $\pi/2$ and one obviously has
\begin{equation*}
r_t(\pi/2) = \sqrt{e^t-1}.
\end{equation*}

$\bullet$ {\underline{$\{\theta,\cos\theta>0$\}:}} 
For these values of $\theta$, observe that  $g_{t,\theta}(2\cos\theta) = e^t$ for all $t$. However, $r_t(\theta) = 2\cos\theta$ does not fulfill our requirements since on the one hand it vanishes at $\theta = \pm \pi/2$ and on the other hand it meets $[1,\infty[$ at $\theta=0$. Fortunately, there exists another radius $r_t$ satisfying $g_{t,\theta}(r_t) = e^t$, $r_t(\pi/2) \neq 0$ and $r_t(0) \in ]0,1[$. Indeed, letting $\theta_t := \arccos(\sqrt{t}/2) \in ]0,\pi/2$, then 
\begin{equation*}
g_{t,\theta}'(2\cos \theta) = \frac{4\cos^2\theta - t}{2\cos \theta}e^t  
\end{equation*}
is negative on $]\theta_t,\pi/2[$, positive on $[0,\theta_t[$ and 
\begin{equation*}
\lim_{r \rightarrow \infty} g_{t,\theta}(r) = + \infty 
\end{equation*}
for any $\theta$ such that $\cos \theta \geq 0$. Besides, this radius is unique except possibly for $2+\sqrt{3} < t < 4$ and for $\theta$ close to zero, and we keep using polar coordinates. For exceptional values of $(t,\theta)$, cartesian coordinates are more adequate and doing so we recover $\gamma_t$ as the graph of some function.

$\blacktriangle$ \underline{$\{ 0< t \leq 2+\sqrt{3}$\}}:
When $t\in]0,1]$, we  shall prove that $g_{t,\theta}$ is a convex function. To this end, we compute the second derivative of $g_{t,\theta}$ 
\begin{align*}
g_{t,\theta}''(r) &= e^{2t\cos \theta/r}\left[\left(\frac{4t\cos\theta}{r^3} + \frac{4t^2\cos^2\theta}{r^4}\right)(1+r^2-2r\cos\theta) + 2 - \frac{8t\cos\theta}{r^2}(r-\cos\theta)\right] \\
            &= \frac{e^{2t\cos \theta /r}}{r^4}\left[2r^4 - 4tr^3\cos\theta + 4t^2r^2\cos^2\theta + 4rt\cos\theta(1-2t\cos^2\theta) +4t^2\cos^2\theta \right].
\end{align*}
The first equality shows that if $r \leq \cos\theta$ then $g_{t,\theta}''(r) > 0$. For $r \geq \cos\theta$, define $k_t$ by 
\begin{equation*}
g_{t,\theta}''(r) = \frac{e^{2t\cos \theta /r}}{r^4}k_{t,\theta}(r).
\end{equation*}
Then 
\begin{eqnarray*}
k_{t,\theta}'(r) &=& 8r^3 - 12tr^2\cos\theta + 8t^2r\cos^2\theta + 4t\cos \theta(1-2t\cos^2\theta) \\
k_{t,\theta}''(r) &=& 8(3r^2-3tr\cos \theta + t^2\cos^2\theta) \geq 0
\end{eqnarray*}
even for all $r \geq 0$. Hence  
\begin{align*}
k_{t,\theta}'(r) \geq k_{t,\theta}'(t\cos\theta) &= 4t\cos\theta(t^2\cos^2\theta -2 t\cos^2\theta+1) 
\\& = (t\cos \theta - 1)^2 + 2t\cos\theta(1-\cos\theta) \geq 0.
\end{align*}
Since $t\cos \theta \leq \cos \theta$ when $t \leq1$, then $k_{t,\theta}'(r) \geq 0$ for all $r \geq 0$ therefore 
\begin{equation*}
k_{t,\theta}(r) \geq k_{t,\theta}(0) = 4t^2\cos^2\theta > 0
\end{equation*}
which yields the strict convexity of $g_{t,\theta}$ for $t \in [0,1]$. Now since 
\begin{equation*}
\lim_{r\to 0}g_{t,\theta}(r) = \lim_{r\to +\infty}g_{t,\theta}(r)=+\infty,
\end{equation*}
then the equation $g_{t,\theta}(r)=e^t$ admits exactly two solutions among them the trivial one $r = r(\theta) = 2\cos\theta$ which has to be discarded. The required curve $\gamma_t$ is then constructed upon the non trivial solution and defines even a $C^1$-piecewise curve. The last claim is obvious for the regular points of $g_{t,\theta}$ by the virtue of the implicit function Theorem again. So we need to focus on the critical points of the curve: 
$g_{t,\theta}'(r(\theta))=0$. But, the strict convexity of $g_{t,\theta}$ forces then $r_t(\theta)=2\cos\theta$ which gives after substituting in $g_{t,\theta}'$ the unique critical point $\theta = \theta_t$ for which $r_{t}(\theta_t)=\sqrt{t}.$
Before considering the range $t\in[1,2+\sqrt3]$, we point out that $\gamma_t, t \in ]0,4[$ crosses the positive real semi-axis at some point $0<x_t<1$ that is described in Lemma \ref{lemme1} below.
Now, let $t\in[1,2+\sqrt3]$ and define 
\begin{equation*}
v_{t,\theta}(r) := r^3 - (t+1)\cos \theta r^2 + 2t\cos^2\theta r - t\cos \theta 
\end{equation*}
so that 
\begin{equation*}
g_{t,\theta}'(r) = \frac{e^{2t\cos\theta/r}}{r^2}v_{t,\theta}(r). 
\end{equation*}
Then the first derivative of $v_{t,\theta}$ reads 
\begin{equation*}
v_{t,\theta}'(r) = 3r^2 - 2(t+1)\cos\theta r + 2t\cos^2\theta.
\end{equation*}
The discriminant of this second degree polynomial is easily computed as $4(t^2 -4t+1)\cos^2\theta$ and is easily seen to be negative on $t\in[1,2+\sqrt3]$. Consequently $v_{t,\theta}^\prime\geq 0$ thereby $v_{t,\theta}(r) \geq v_{t,\theta}(0) = -t\cos \theta$ for any $r \geq 0$. Finally, there exists $r_0 = r_0(t,\theta)$ such that $g_{t,\theta}'(r_0) = 0$ so that the variations of $g_{t,\theta}$ are described by 
\\
\\
 \centerline{$
\begin{array}{|c|ccccr|}
\hline
r     & 0 &    & r_0  &     & +\infty  \\
\hline
v_{t,\theta}(r) &          &  - & 0  & + &   \\
\hline
 &  & &  & & \\       
g_{t,\theta}(r) &  +\infty&\searrow   &  &\nearrow  &+\infty \\   
&  & & & &  \\          
\hline
\end{array}
$
}
\\
\\
The conclusion follows in similar way to the previous range of times $t\in]0,1].$ 

$\blacktriangle$ \underline{$2+\sqrt{3} < t < 4$}: 
This part of the proof needs more care since for small amplitudes of $\theta$, $\theta\mapsto r_t(\theta)$ may be  a multi-valued function (this is seen from computer-assisted pictures). This multivalence happens precisely in the interior of the region bounded by the curve $\theta\mapsto 2\cos\theta$ or $0 \leq \theta < \theta_t$, and cartesian coordinates are more adequate for our purposes. Nonetheless, we shall keep use of  polar coordinates outside the latter curve where the existence of a required radius is ensured by the same arguments evoked above. It then remains to prove uniqueness on $]2\cos \theta,+\infty[, \theta \in ]\theta_t,\pi/2]$. To this end, one easily sees that the largest root of $v_{t,\theta}'(r)$ is given by
\begin{equation*}
R_t^{+}(\theta) := \frac{t+1 + \sqrt{t^2-4t+1}}{3} \cos \theta
\end{equation*}
and that $t \mapsto R_t^+(\theta)$ is increasing. Thus, $R_t^+(\theta)\le R_4(\theta)=2\cos\theta$ yielding $v_{t,\theta}^\prime(r)>0$ for $r>2\cos\theta.$ This entails
\begin{equation*}
v_{t,\theta}(r) \geq v_{t,\theta}(2\cos\theta)=4\cos\theta(\cos^2\theta-{t}/{4})
\end{equation*}
which is negative on $]\theta_t,\pi/2]$. Therefore the variations of $g_{t,\theta}$ are summarized below
\\
\\
\centerline{$
\begin{array}{|c|ccccr|}
\hline
r     & 2\cos\theta &    & r_0  &     & +\infty  \\
\hline
v_{t,\theta}(r) &          &  - & 0  & + &   \\
\hline
 &  & &  & & \\       
g_{t,\theta}(r) &  e^t&\searrow   &  &\nearrow  &+\infty \\   
&  & & & &  \\          
\hline
\end{array}
$
}
\\
\\
for some $r_0 = r_0(t,\theta) > 2\cos \theta$, whence the uniqueness follows. Hence, the obtained branch of $\gamma_t$ is smooth and its endpoints are $i\sqrt{e^t-1}$ and $\sqrt{t}\,e^{i\arccos(\sqrt{t}/2)}$ . 

\begin{nota}
The equation $g_{t,\theta}(r)=e^t$ has no solution in the region $\big\{r>2\cos\theta,\, 0\le\theta<\theta_t \big\}.$ Indeed, $v_{t,\theta}(r) \geq v_{t,\theta}(2\cos\theta)>0$ so that $g_{t,\theta}$ is increasing.
\end{nota}

\subsection{cartesian coordinates}  

The curve $\theta\mapsto2\cos \theta, \theta\in[0,\pi/2]$ is the graph of the function $x\mapsto y = \sqrt{x(2-x)}, x \in [0,2]$. Hence, we shall restrict ourselves to the region 
\begin{equation*}
\Big\{0\le y<\sqrt{x(2-x)}, 0 < x < 2\Big\}. 
\end{equation*}
Besides, the branch of $\gamma_t$, if it exists, would meet the axis $\{y=0\}$ at a solution of  
\begin{equation*}
k_t(x):=(x-1)^2e^{t[(2/x)-1]} = 1, 0<x<2,
\end{equation*}
whose needed properties are collected in the following Lemma.    
\begin{lem}\label{lemme1}
For every $ t\in]0,4[$, the above equation admits a unique solution $x_t \in ]0,1[$ and the map $t\mapsto x_t$ is increasing. In particular $x_t > 3-\sqrt{5}$. 
\end{lem}
\begin{proof} 
Let $t \in ]0,4[$ then 
\begin{equation*}
k_t'(x)=\frac{2(x-1)(x^2-tx+t)}{x^2}e^{t[(2/x)-1]}
\end{equation*}
and the polynomial $x^2-tx+t$ is obviously positive since its discriminant is negative. As a result, we get the variations of $k_t$\\
\\
\centerline{$
\begin{array}{|c|ccccr|}
\hline
x    & 0   & & 1     & & 2 \\
\hline
k_t'(x) &&-    &&   + &  \\
\hline &&  & &  & \\       
k_t(x) & +\infty &\searrow 0  && \nearrow   &1\\   
&  & & &&   \\         
\hline
\end{array}
$
}
\\
\\
This asserts the existence of a unique value $x_t\in ]0,1[$ solving the equation $k_t(x)=1$. For the variations of $t\mapsto x_t$, we write
\begin{equation*}
x_t^\prime=-\frac{\partial_t k_t(x_t)}{\partial_x k_t(x_t)}=\frac{x_t(1-x_t)(2-x_t)}{2(x_t^2-tx_t+t)} > 0
\end{equation*}
so that $x_t > x_3$ and the Lemma is proved by noting that $k_3(3-\sqrt5) = (\sqrt{5} - 1)^2 e^{3(1+\sqrt{5})/2} \geq 1$.
\end{proof}

Now, we rewrite $g_{t,\theta}(r) = e^t$ using cartesian coordinates as 
\begin{equation*}
(1+x^2+y^2-2x)e^{2tx/(x^2+y^2)}=e^t
\end{equation*}
and denote $g_{t,x}(y)$ the LHS. In this way, $g_{t,x}(0) = k_t(x)e^t \leq e^t$ for $x \in [x_t,2]$  since $k_t(x) < 1$ for $x \in [x_t,2]$, while $g_{t,x}(0) > e^t$ for $x \notin [0,x_t[$.  
We shall prove that for each $x\in [x_t,t/2] \subset [x_t,2[$, the equation $g_{t,x}(y) = e^t$ admits a unique solution $\Big\{0\le y<\sqrt{x(2-x)}\Big\}$ while it has none when $x \notin ]x_t,t/2[$. This in turn finishes the proof of our main result. 
To proceed, we first compute
\begin{eqnarray*}
g_{t,x}'(y)&=&\frac{2y}{(x^2+y^2)^2}\Big(y^4+(2x^2-2x t)y^2+x^4-2x^3t+4x^2 t-2x t\Big)e^{2tx/(x^2+y^2)}\\
&:=& \frac{2y}{(x^2+y^2)^2}e^{2tx/(x^2+y^2)} w_{t,x}(y^2).
\end{eqnarray*}
The discriminant of $w_{t,x}(y)$ is given by $4tx(2+(t-4)x)$ and is positive since $t\geq 3$ and $x\leq 2$. Thus  $w_{t,x}(y)$ has two roots 
\begin{equation*}
y_t^\pm:=x(t-x)\pm\sqrt{tx(2+(t-4)x)}
\end{equation*}
satisfying  the following properties:

\begin{lem}\label{encadr} Let $2+\sqrt{3} < t < 4$, then:
\begin{enumerate}
\item For $x\in [x_t,2],$ then\quad $ 0 < y_t^{-}(t)\le y_t^+.$
\item  For $x\in[0,t/2]$ then\quad $y_t^{-}(t)\leq x(2-x) \leq  y_t^+$.
\item For $x\in [t/2,2]$, then\quad $x(2-x)\le y_t^{-} \leq  y_t^+$
\end{enumerate}
\end{lem}
\begin{proof}
Since $x \in ]0,2[$, then $y_t^+\geq0$ and the first property $(1)$ is equivalent to 
$$
y_t^+ y_t^- = x^4 -2x(x-1)^2 t > 0.
$$
But for any $x > 0$, the function $t\mapsto x^4-2x(x-1)^2 t$ is decreasing for positive $t$ therefore 
$$
x^4 -2x(x-1)^2 t > x^4 -8x(x-1)^2 = x(x-2)\big( x-(3-\sqrt5)\big)\big( x-(3+\sqrt5)\big)
$$
for any $t < 4$. Consequently $x^4 -2x(x-1)^2 t > 0$ for $x\in[3-\sqrt 5,2]$ in particular for $x \in [x_t,2]$ since $x_t > 3-\sqrt{5}$ by the virtue of Lemma \ref{lemme1}.  
\\
Since $y_t^+ > x(2-x)$, then the second property $(2)$ is equivalent to 
$$
x^2(t-2)^2 \leq tx(2+(t-4)x)
$$
which is in turn equivalent to $t-2x\geq0$. We are done.
\end{proof}
It remains to discuss the variations  of  $g_{t,x}$ according to:
\\
{ \it{$\star$ $x\in [x_t,t/2]$:}} Using properties $(1)$ and $(2)$ stated in Lemma \ref{encadr} we get
\\
\\
 \centerline{$
\begin{array}{|c|ccccr|}
\hline
y     & 0   & & {(y_t^-)}^{\frac12}     &   {(y_t^+)^{\frac12}}   & +\infty  \\
\hline
g_{t,x}'(y) &&+    & -  &  + &  \\
\hline &&  &&   & \\       
g_{t,x}(y) & g_{t,x}(0) &\nearrow   & \searrow &\nearrow  &+\infty \\   
&  & & & &  \\         
\hline
\end{array}
$
}
\\
\\
Since $g_{t,x}(0) \leq e^t,$  then the equation $g_{t,x}(y)=e^t$ has a unique solution $y_{t}(x)$ lying in the interval  $[0,\sqrt{x(2-x)}[$. This allows to construct a curve $x\mapsto y_t(x)$ for $x\in [x_t,t/2]$  which is continuous since the function $g_{t,x}$ depends continuously on the parameter $x$. By the virtue of the implicit function Theorem, it is even at least $C^1$-piecewise curve  since the derivative $g_{t,x}^\prime(y)$ vanishes only in a finite set. 
\\
{ $\star$ \it{$x\in [t/2,2]$:}} Using properties $(1)$ and $(3)$ of Lemma \ref{encadr}, we conclude that $g_{t,x}$ is increasing on $[0,\sqrt{x(2-x)}]$.   \mbox{Since $g_{t,x}(\sqrt{x(2-x)})=e^t$} then the equation $g_{t,x}(y)=e^t$ has no solution in $[0,\sqrt{x(2-x)}[.$
\\
{ $\star$ \it{$x\in ]0, x_t[$:}} Since $x_t \in ]0,1[$ and $t/2 > 1+(\sqrt{3}/2) > x_t$, then we make use of property $(2)$ of Lemma \ref{encadr}. But the issue depends on whether or not $y_t^-$ is positive. Assume $y_t^-$ is negative then $g_{t,x}$ is decreasing on $[0,\sqrt{x(2-x)}[$ and thus the equation $g_{t,x}(y)=e^t$ has no solution in this interval. Otherwise $y_t^->0$ and $g_{t,x}$ keeps the same variations as in the range $x\in[x_t,t/2]$:\\
\\ 
 \centerline{$
\begin{array}{|c|ccccr|}
\hline
y     & 0   & & {(y_t^-)}^{\frac12}     & & \sqrt{x(2-x)}  \\
\hline
g_{t,x}'(y) &&+    &&   - &  \\
\hline &&  & &  & \\       
g_{t,x}(y) & g_{t,x}(0) &\nearrow  && \searrow   &e^t\\   
&  & & &&   \\         
\hline
\end{array}
$
}
\\
\\
We remark that  $g_{t,x}(0)= e^t\,k(t,x)$ and  according to the variations of $x\mapsto k(t,x)$ we get $k(t,x)>1$ for $0<x<x_t.$ Consequently  the equation  $g_{t,x}(y)=e^t$ has no solution in $]0, x_t[.$\\
Finally, the above discussion shows the set  $\big\{(x,y), g_{t,x}(y)=e^t, x\in]0,2[, 0\le y\le\sqrt{x(2-x)}\big\}$ is described by a unique $C^1$-piecewise graph joining the points $(x_t,0)$ and $z = \sqrt{t}e^{i\arccos(\sqrt{t}/2)}.$
\end{proof}

\begin{nota}
For $t=4$, $g_{t,\theta}'(2\cos \theta) = 0$ if and only if $\theta= \theta_t=0$. Thus both curves whose radii solve $g_{t,\theta}(r) = e^t$ meet at $\theta =0$ thereby satisfy $r_t(0) = 2 > 1$. When $t > 4$, they even become disconnected.   
\end{nota}

\section{Critical points of $h_t$}
Let $z_t$ be a piecewise smooth parametrization of $\gamma_t$ and consider $\theta \mapsto \arg[h_t(z_t(\theta))], \theta \in [-\pi,\pi]$. Using the invariance of $\gamma_t$ under complex conjugation, we restrict our attention to $\theta \in [0,\pi]$. If $\theta \in \{0,\pi\}$ then $\arg(1-z) = 0, z \in \gamma_t$ since $r_t(0) \in ]0,1[$ therefore $\arg[h_t(z_t(0))] = 0$. Thus we discard these two values and consider $\theta \in (0,\pi)$. Then $\arg(1-z) \in (-\pi,0)$ and we need to look for critical points of 
\begin{equation*}
\arg[h_t(z)] =  \cot^{-1} \left[\frac{r\cos\theta -1}{r\sin\theta}\right] - \pi - \frac{t}{r}\sin\theta
\end{equation*}
under the constraint $z = z_t  = r_t(\theta)e^{i\theta} \in \gamma_t \cap \mathbb{C}^+$. For ease of notations, we shall omit the dependence on $t$ of the radius of $\gamma_t, t \in ]0,4[$ and write simply $r_\theta$. Hence  
\begin{align*}
\frac{d}{d\theta}\arg[h_t(r_\theta e^{i\theta})] = \frac{r_\theta^2 - r_\theta\cos \theta -r_\theta'\sin\theta}{r_\theta^2 - 2r_\theta\cos \theta+1} - t \frac{r_\theta\cos\theta - r_\theta'\sin\theta}{r_\theta^2}
\end{align*}
which vanishes if and only if 
\begin{eqnarray*}
r_\theta\Big[r_{\theta}^3 - r_{\theta}^2\cos\theta - t\cos\theta(r_{\theta}^2 - 2r_{\theta}\cos \theta+1)\Big] = r_{\theta}'\sin\theta\Big[(r_{\theta}^2 - t(r_{\theta}^2- 2r_{\theta}\cos\theta+1)\Big].  
\end{eqnarray*}
By the virtue of \eqref{E1}, the LHS may be written  as 
\begin{equation*}
\frac{1}{2}r_{\theta}^3\partial_r(g_{t,\theta})(r_{\theta}) e^{-(2t\cos\theta)/r_{\theta}}
\end{equation*}
while 
\begin{equation*}
\partial_{\theta}(g_{\theta})(r_{\theta}) = \frac{2\sin \theta}{r_{\theta}} \Big[r_{\theta}^2 - t(r_{\theta}^2- 2r_{\theta}\cos\theta+1)\Big]e^{(2t\cos\theta)/r_{\theta}}.
\end{equation*}
It follows that the following equality holds at any critical point 
\begin{equation*}
r_{\theta}^2\partial_r(g_{t,\theta})(r_{\theta}) = r'(\theta)\partial_{\theta}(g_{t,\theta})(r_{\theta}). 
\end{equation*}
Now comes the constraint $g_{t,\theta}(r_{\theta}) = e^t$ that we shall differentiate with respect to $\theta$ to get
\begin{equation*}
r'(\theta)\partial_r(g_{t,\theta})(r_{\theta}) + \partial_{\theta}(g_{t,\theta})(r_{\theta}) = 0. 
\end{equation*}

Both identities yield 
\begin{align*}
-r_{\theta}^2[\partial_r(g_{t,\theta})]^2(r_{\theta}) &= [\partial_{\theta}(g_{t,\theta})]^2(r_{\theta}).
\end{align*}
Since $r_\theta\neq0$ then  a critical value $\theta$ must satisfy 
\begin{equation*}
\partial_r(g_{t,\theta})(r_{\theta}) = \partial_{\theta}(g_{t,\theta})(r_{\theta}) = 0
\end{equation*}
which can not occur unless
\begin{equation*}
r_{\theta} = 2\cos\theta.
\end{equation*}
As a result $\theta = \theta_t$ and one easily derives using $2\cos\theta_t = \sqrt{t}$
\begin{align*}
\arg[h_t(2\cos\theta_t e^{i\theta_t})] & =  2\theta_t -\pi- \frac{1}{2}\sqrt{t(4-t)} 
\\& =   2\arccos \frac{\sqrt{t}}{2}-\pi - \frac{1}{2}\sqrt{t(4-t)}.
 \end{align*}
 Finally 
 \begin{equation*}
 \cos\left[2\arccos \frac{\sqrt{t}}{2}-\pi\right] = - \cos\left[2\arccos \frac{\sqrt{t}}{2}\right] = 1-\frac{t}{2}
 \end{equation*}
 whence we deduce that  
\begin{align*}
\arg[h_t(2\cos\theta_t e^{i\theta_t})] =  -\arccos\left(1-\frac{t}{2}\right) - \frac{1}{2}\sqrt{t(4-t)} = -\beta(t).
 \end{align*}
A similar analysis shows that $\theta \mapsto \arg[h_t(z_t(\theta))]$ has a unique critical point when $z_t \in \gamma_t \cap \mathbb{C}^-$. It is precisely given by $\theta = -\theta_t$ and 
\begin{align*}
\arg[h_t(2\cos\theta_t e^{-i\theta_t})] = \beta(t).
\end{align*}

We now proceed to the description of $\mu_t, t \in (0,4)$. 

\section{Description of $\mu_t, t \in (0,4)$} 
We have already seen that there are exactly two critical points of $\theta \mapsto \arg[h_t(z_t(\theta))], \theta \in [-\pi,\pi]$. This fact leads easily to: 
\begin{Prop}\label{curve}
There exists a partition $\gamma_t=\gamma_t^1\cup\gamma_t^2$ with 
\begin{equation*}
\gamma_t^1\subset\{z,|z-1|\leq1\}, \gamma_t^2\subset\{z,|z-1|\geq1\} 
\end{equation*}
and  such that the maps $h_{t,1}\equiv h_t:\gamma_t^1\to h_t(\gamma_t)$ and $h_{t,2}\equiv h_t:\gamma_t^2\to h_t(\gamma_t)$ are diffeomorphisms. Moreover, let $e^{i\phi}\in h_t(\gamma_t)$ then the equation 
$h_t(z)=e^{i\phi}, z \in \gamma_t$ has exactly two solutions  given by $z\in \gamma_t^1$ and 
\begin{equation*}
\frac{\overline{z}}{\overline{z}-1}\in \gamma_t^2.
\end{equation*}
\end{Prop}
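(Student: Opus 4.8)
The plan is to cut $\gamma_t$ at its two critical points and then to transfer all the injectivity we need from one piece to the other through the anti-holomorphic involution $\psi(z):=\bar z/(\bar z-1)$, which is the complex conjugate of the M\"obius map occurring in the statement. First I would fix the partition. By the previous section the only critical points of $\theta\mapsto\arg[h_t(z_t(\theta))]$ on $[-\pi,\pi]$ are $\pm\theta_t$, attained at $p_\pm:=\sqrt t\,e^{\pm i\theta_t}$; since $|p_\pm-1|^2=t-2\sqrt t\cos\theta_t+1=1$, these two points lie on the circle $\mathcal C:=\{|z-1|=1\}$. Removing them splits $\gamma_t$ into two open arcs, and I let $\gamma_t^1$ be the closure of the one crossing $(0,\infty)$ (at $x_t\in(0,1)$, by Lemma \ref{lemme1}) and $\gamma_t^2$ the closure of the one crossing $(-\infty,0)$ (at $-r_t(\pi)$, the unique point of $\gamma_t$ there). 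Along $\gamma_t$ one has $|z-1|^2-1=r^2-2r\cos\theta$, whose sign is that of $r-2\cos\theta$ when $\cos\theta>0$ and which is positive when $\cos\theta\le0$; the case analysis of the construction of $\gamma_t$ shows exactly that the branch through $x_t$ has $r_t(\theta)<2\cos\theta$ for $\theta\in(-\theta_t,\theta_t)$ while the complementary branch has $r_t(\theta)>2\cos\theta$ (or $\cos\theta\le0$), with equality only at $\pm\theta_t$. This gives $\gamma_t^1\subset\{|z-1|\le1\}$ and $\gamma_t^2\subset\{|z-1|\ge1\}$; and since the full solution set of $|h_t(z)|=1$ in $\mathbb{C}^{*}$ is $\gamma_t\cup(\mathcal C\setminus\{0\})$ by the construction (the ``trivial'' solutions $r=2\cos\theta$ exhausting $\mathcal C\setminus\{0\}$), one also reads off that $\{|h_t|=1\}\cap\{|z-1|<1\}=\gamma_t^1\setminus\{p_\pm\}$ and $\{|h_t|=1\}\cap\{|z-1|>1\}=\gamma_t^2\setminus\{p_\pm\}$.

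Next I would show $h_{t,1}:=h_t|_{\gamma_t^1}$ is a diffeomorphism onto $h_t(\gamma_t)$. From
\[
h_t'(z)=-e^{t(1/z-1/2)}\,\frac{z^2-tz+t}{z^2}
\]
and the fact that $z^2-tz+t$ has discriminant $t(t-4)<0$ for $t\in(0,4)$, $h_t'$ never vanishes on $\mathbb{C}^{*}$, so $h_t$ is a local biholomorphism there. On the open arc $\gamma_t^1\setminus\{p_\pm\}$ the derivative of $\theta\mapsto\arg[h_t(z_t(\theta))]$ computed in the previous section has no zero, hence is of constant sign; together with $\arg[h_t(x_t)]=0$ (because $h_t(x_t)=(1-x_t)e^{t(1/x_t-1/2)}=1$ by Lemma \ref{lemme1}), $\arg[h_t(p_\pm)]=\mp\beta(t)$, and $\beta(t)<\pi$ for $t<4$, this shows $h_{t,1}$ is injective with image the arc $\{e^{i\phi}:|\phi|\le\beta(t)\}$; being an injective local biholomorphism it is a diffeomorphism onto its image.

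Finally I would treat $\gamma_t^2$ and the double solvability at once, through the identity
\[
h_t\!\left(\frac{\bar z}{\bar z-1}\right)=\frac{1}{\overline{h_t(z)}},
\]
which follows from $1-\frac{\bar z}{\bar z-1}=\frac{1}{1-\bar z}$ and $\left(\frac{\bar z}{\bar z-1}\right)^{-1}-\frac12=\frac12-\frac1{\bar z}$. The map $\psi$ is an anti-holomorphic diffeomorphism of $\mathbb{C}^{*}$; it fixes $\mathcal C$ pointwise (if $|z-1|=1$ then $\bar z=z/(z-1)$), it satisfies $|\psi(z)-1|=1/|z-1|$ so it exchanges $\{|z-1|<1\}$ and $\{|z-1|>1\}$, and by the identity it preserves $\{|h_t|=1\}$; with the description of $\{|h_t|=1\}$ from above and $\psi(p_\pm)=p_\pm$ this forces $\psi(\gamma_t^1)=\gamma_t^2$. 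Since $h_t|_{\gamma_t^2}=(w\mapsto1/\bar w)\circ h_{t,1}\circ(\psi|_{\gamma_t^1})^{-1}$, it is a diffeomorphism onto $h_t(\gamma_t^1)=h_t(\gamma_t)$ (using $h_t(\gamma_t^1)\subset\mathbb{T}$). Then, given $e^{i\phi}\in h_t(\gamma_t)$, let $z\in\gamma_t^1$ be the unique solution of $h_t(z)=e^{i\phi}$ there; since $\psi(z)\in\gamma_t^2$ and $h_t(\psi(z))=1/\overline{e^{i\phi}}=e^{i\phi}$, injectivity of $h_t$ on $\gamma_t^2$ shows that the solutions on $\gamma_t$ are exactly $z$ and $\psi(z)=\bar z/(\bar z-1)$ (coalescing to $p_\mp$ when $\phi=\pm\beta(t)$).

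I expect the main obstacle to be the first step: extracting the sign of $r_t(\theta)-2\cos\theta$ on each arc and the decomposition of $\{|h_t|=1\}$ requires revisiting the several cases of the construction of $\gamma_t$ (polar versus cartesian coordinates, and the regime $2+\sqrt3<t<4$) rather than invoking a single clean inequality; once that is settled, the remaining steps are routine.
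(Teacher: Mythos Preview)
Your argument is correct and follows essentially the same route as the paper: partition $\gamma_t$ by the circle $\{|z-1|=1\}$, observe the two critical points $\sqrt t\,e^{\pm i\theta_t}$ sit on that circle, use monotonicity of $\arg h_t(z_t(\theta))$ on each arc to get injectivity, and then invoke the identity $h_t(\bar z/(\bar z-1))=1/\overline{h_t(z)}$ together with the involution $\psi$ to pair the two pieces and read off the two preimages. You supply more detail than the paper does---in particular the explicit nonvanishing of $h_t'$ via the discriminant $t(t-4)<0$, the check that $\beta(t)<\pi$, and the description of the full level set $\{|h_t|=1\}=\gamma_t\cup(\mathcal C\setminus\{0\})$ used to pin down $\psi(\gamma_t^1)=\gamma_t^2$---but none of this departs from the paper's strategy.
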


\begin{proof}
The curves $\gamma_t^1$ and $\gamma_t^2$ are given by 
\begin{eqnarray*}
\gamma_t^1 & = & \big\{z\in \gamma_t, |z-1|\le1\big\} \\ 
\gamma_t^2 & = & \big\{z\in \gamma_t, |z-1|\geq1\big\}.
\end{eqnarray*}
It is clear that the critical points are located in the circle $\{z,|z-1|=1\}$ and therefore they are the end points of the curves $\gamma_t^1$ and $\gamma_t^2$. Therefore by the previous analysis of $\theta \mapsto \arg[h_t(z_t(\theta))]$ we deduce that  $h_{t,1},h_{t,2}$ are  diffeomorphisms. 
Finally, for any $z \in \gamma_t$
\begin{equation*}
h_t\left[\frac{\overline{z}}{\overline{z}-1}\right] = \frac{1}{\overline{h_t(z)}} = h_t(z). 
\end{equation*}  
We have used in the last identity the fact that $h_t(z)\in \mathbb{T}.$ We point out that the m\"{o}bius  transform $z\mapsto \frac{z}{z-1}$ is  a bijective map from $\gamma_{t,1}$ to $\gamma_{t,2}.$
\end{proof}
Thus we obviously have\footnote{$\gamma_t$ is parametrized from $\theta_t$ to $2\pi-\theta$ counter-clockwise.}  
\begin{eqnarray*}
m_n(t)=\frac{1}{2i\pi\,t}\int_{\gamma_t^1} [h_t(z)]^n \frac{h_t'(z)}{h_t(z)}\log(1-z) dz+\frac{1}{2i\pi\,t}\int_{\gamma_t^2} [h_t(z)]^n \frac{h_t'(z)}{h_t(z)}\log(1-z) dz.
\end{eqnarray*}
We perform the change of variables in the first integral: $h_{t,1}(z)=h_t(z)=e^{i\theta}$ then 
\begin{equation*}
i(d\theta)=\frac{h_t^\prime(z)}{h_t(z)}dz.
\end{equation*}
Since $\arg[h_t(z_t(\theta))]$ reaches its minimum at $\theta = \theta_t$, then 
\begin{equation*}
\frac{1}{2i\pi t}\int_{\gamma_t^1} [h_t(z)]^n \frac{h_t'(z)}{h_t(z)}\log(1-z) dz= - \frac{1}{2\pi t}\int_{-\beta(t)}^{\beta(t)} e^{in\theta} \log(1-h_{t,1}^{-1}(e^{i\theta})) d\theta.
\end{equation*}
Similarly we get
\begin{equation*}
\frac{1}{2i\pi t}\int_{\gamma_t^2} [h_t(z)]^n \frac{h_t'(z)}{h_t(z)}\log(1-z) dz = \frac{1}{2\pi t}\int_{-\beta(t)}^{\beta(t)} e^{in\theta} \log(1-h_{t,2}^{-1}(e^{i\theta})) d\theta,
\end{equation*}
consequently 
\begin{equation*}
m_n(t)=\frac{1}{2\pi t}\int_{-\beta(t)}^{\beta(t)} e^{in\theta} \log\Big[\frac{1-h_{t,2}^{-1}(e^{i\theta})}{1-h_{t,1}^{-1}(e^{i\theta})}\Big] d\theta.
\end{equation*}
The last part of Proposition \ref{curve} shows that:
\begin{equation*}
h_{t,1}^{-1}(e^{i\theta})=\overline{\left(\frac{h_{t,2}^{-1}(e^{i\theta})}{h_{t,2}^{-1}(e^{i\theta})-1}\right)}
\end{equation*}
implying that 
\begin{equation*}
\frac{1-h_{t,2}^{-1}(e^{i\theta})}{1-h_{t,1}^{-1}(e^{i\theta})}=|h_{t,2}^{-1}(e^{i\theta})-1|^2.
\end{equation*}
Together with $|h_{t,2}^{-1}(e^{i\theta})-1|\geq1$ yield
\begin{eqnarray*}
m_n(t)&=&\frac{1}{\pi t}\int_{-\beta(t)}^{\beta(t)} e^{in\theta} \log|h_{t,2}^{-1}(e^{i\theta})-1|\, d\theta. 
\end{eqnarray*}
Thus 
\begin{equation*}
m_{-n}(t) = \overline{m_n(t)} =  \frac{1}{\pi t}\int_{-\beta(t)}^{\beta(t)} e^{i n\theta} \log|h_{t,2}^{-1}(e^{i\theta})-1|\, d\theta, \, n \geq 1
\end{equation*}
whence we deduce that spectral measure  $\mu_t$  is given by  
\begin{equation*}
d\mu_t(\theta)=\frac{2}{t}{\bf{1}}_{[-\beta(t),\beta(t)]}(\theta)\log|h_{t,2}^{-1}(e^{i\theta})-1|\, \frac{d\theta}{2\pi} :=\rho_t(\theta)d\theta.
\end{equation*}
Moreover, $\rho_t$ is continuous since $\rho_t(-\beta(t))=\rho_t(\beta(t))=0$ which follows from $h_{t,2}^{-1}(e^{\pm i\beta_t}) = 1+ e^{\mp 2i\theta_t}$. 
\begin{nota}
Set 
\begin{equation*}
Z =  \frac{2}{z}-1
\end{equation*}
then 
\begin{equation*}
h_t(z) = h_t\left(\frac{2}{Z+1}\right) = \frac{Z-1}{Z+1}e^{tZ/2} = \tau_t^{-1}(Z)
\end{equation*}
where the last equality holds for 
\begin{equation*}
\Gamma_t:= \{\Re(Z) > 0, |\tau_t^{-1}(Z)| < 1\} = \{|z-1| < 1, |h_t(z)| < 1\}
\end{equation*}
and extends to $\overline{\Gamma_t}$ (see \cite{Biane0}). It follows that 
\begin{equation*}
h_{t,1}^{-1} = \frac{2}{\tau_t + 1}
\end{equation*}
on the closed unit disc and one derives 
\begin{align*}
\frac{2}{t}\log|h_{t,2}^{-1}(e^{i\theta})-1| &= -\frac{2}{t} \log|1- h_{t,1}^{-1}(e^{i\theta})| 
\\& = -\frac{2}{t} \log\left|\frac{\tau_t(e^{i\theta}) - 1}{\tau_t(e^{i\theta}) + 1}\right| 
\\& = -\frac{2}{t} \log\left|e^{-t\tau_t(e^{i\theta})/2}\right| = \Re[\tau(e^{i\theta})]
\end{align*} 
as stated in \cite{Biane0} p. 270.
 \end{nota}

\section{Open question; a combinatorial approach}
From a combinatorial point of view, the number 
\begin{equation*}
n^{k-1}\binom{n}{k+1}
\end{equation*}
was interpreted as the number of increasing paths having exactly $k$ steps in the Cayley graph of the symmetric group $\mathcal{S}_n$ (\cite{Levy} p. 564). In this spirit, we also figure out that the series $S_k$ displayed in the introductory part may be expressed through the so-called Riordan's polynomials $({}^rA_n)_{n \geq 0}$ for a nonnegative integer parameter $r$ (see last chapter of \cite{Riordan}). These polynomials generalize the famous Euler's polynomials and according to p. 21 in \cite{Foata}: 
\begin{align*}
S_k(u)  = \frac{{}^{k+1}A_{2k}(u)}{(k+1)!(1-u)^{2k+1}}
\end{align*}
so that 
\begin{equation*}
M_t(w) = u \sum_{k \geq 0} \frac{(-ut)^k}{k!}S_k(u)
\end{equation*}
is a generating function of Riordan's polynomials whose parameter and degree are dependent. It would be interesting to adapt Foata's summation method to this setting in order to work out the series $M_t$. 
Note nonetheless that our approach yields the representation
\begin{align*}
M_t(w) &= \frac{1}{2i\pi\, t} \int_{\gamma_t} \frac{wh_t'(z)}{1-wh_t(z)} \log(1-z) dz
\end{align*} 
for any $|w| < 1$.

\end{document}